\providecommand{\U}[1]{\protect\rule{.1in}{.1in}}
\newtheorem{theorem}{Theorem}
\newtheorem{acknowledgement}[theorem]{Acknowledgement}
\newtheorem{example}[theorem]{Example}
\newtheorem{remark}[theorem]{Remark}
\newenvironment{proof}[1][Proof]{\noindent\textbf{#1.} }{\ \rule{0.5em}{0.5em}}
\begin{document}

\title{Polynomial solutions of differential-difference equations}
\author{Diego Dominici\\Department of Mathematics\\State University of New York at New Paltz\\1 Hawk Dr. Suite 9\\New Paltz, NY 12561-2443\\USA\\dominicd@newpaltz.edu.
\and Kathy Driver\\Department of Mathematics and Applied Mathematics\\University of Cape Town\\Private Bag X3, Rondebosch 7701\\Cape Town\\South Africa\\kathy.driver@uct.ac.za.
\and Kerstin Jordaan\\Department of Mathematics and Applied Mathematics\\University of Pretoria\\Pretoria, 0002\\South Africa\\kjordaan@up.ac.za}
\maketitle

\begin{abstract}
We investigate the zeros of polynomial solutions to the
differential-difference equation
\[
P_{n+1}(x)=A_{n}(x)P_{n}^{\prime}(x)+B_{n}(x)P_{n}(x),~ n=0,1,\dots
\]
where $A_{n}$ and $B_{n}$ are polynomials of degree at most $2$ and $1$
respectively. We address the question of when the zeros are real and simple
and whether the zeros of polynomials of adjacent degree are interlacing. Our
result holds for general classes of polynomials but includes sequences of
classical orthogonal polynomials as well as Euler-Frobenius, Bell and other polynomials.

\end{abstract}

\noindent AMS MOS Classification: 33C45, 42C05\quad\medskip

\noindent Keywords: Interlacing properties; Zeros.

\section{Introduction}

Let $\{P_{n}(x)\}_{n=0}^{\infty}$ be the sequence of polynomials defined by
\begin{align}
P_{0}(x)  &  =1\nonumber\\
P_{n+1}(x)  &  =A_{n}(x)P_{n}^{\prime}(x)+B_{n}(x)P_{n}(x),~n=0,1,\dots,
\label{dde}%
\end{align}
where $A_{n}(x)$ and $B_{n}(x)$ are polynomials in $x$ of degree at most $2$
and $1$ respectively. The best-known families of classical orthogonal
polynomials satisfy differential-difference equations of this type:

\begin{itemize}
\item Jacobi polynomials
\begin{align*}
P_{n+1}^{\left(  \alpha,\beta\right)  }(x)  &  =\frac{\left(  2n+2+\alpha
+\beta\right)  \left(  x^{2}-1\right)  }{2(n+1)\left(  n+1+\alpha
+\beta\right)  }\frac{d}{dx}P_{n}^{\left(  \alpha,\beta\right)  }(x)\\
&  +\frac{\left(  2n+2+\alpha+\beta\right)  x+\alpha-\beta}{2(n+1)}%
P_{n}^{\left(  \alpha,\beta\right)  }(x).
\end{align*}

\item Laguerre polynomials%
\[
L_{n+1}^{\left(  \alpha\right)  }(x)=\frac{x}{n+1}\frac{d}{dx}L_{n}^{\left(
\alpha\right)  }(x)+\frac{\alpha+n+1-x}{n+1}L_{n}^{\left(  \alpha\right)
}(x).
\]

\item Hermite polynomials%
\[
H_{n+1}(x)=-\frac{d}{dx}H_{n}(x)+2xH_{n}(x).
\]

\end{itemize}

In these cases, it follows from the theory of orthogonal polynomials
\cite{MR0372517} that the zeros of $P_{n}(x)$ are real and simple and that the
zeros of $P_{n+1}(x)$ and $P_{n}(x)$ are interlacing. This leads to the
question of what can one say about the zeros of a sequence of polynomials
satisfying (\ref{dde}) which is not orthogonal. Examples of such sequences
include the Bell polynomials \cite{MR0146097}, the Euler-Frobenius polynomials
\cite{frobenius} and the so-called derivative polynomials \cite{MR1321452}.

\medskip A number of authors have investigated the properties of the zeros of
sequences of polynomials that are solutions of (\ref{dde}) but are not, in
general, orthogonal. In \cite{MR1174072}, Vertgeim considered polynomials
generated by (\ref{dde}) with
\[
A_{n}(x)=a_{n}x^{2}-b_{n},\quad B_{n}(x)=\alpha a_{n}x,\quad\alpha,a_{n}%
,b_{n}>0,
\]
that generalize the Euler polynomials. In \cite{MR1471728}, \cite{MR1359933},
Dubeau and Savoie study interlacing properties of the zeros of polynomial
solutions of (\ref{dde}) with
\[
A_{n}(x)=\kappa_{n}\left(  1-x^{2}\right)  ,\quad B_{n}(x)=-2\kappa_{n}%
r_{n}x,\quad r_{n}>0,\kappa_{n}\neq0,
\]
which contain the generalized Euler-Frobenius and the ultraspherical
polynomials as special cases. They also consider the Hermite-like polynomials
defined by (\ref{dde}) with
\[
A_{n}(x)=\kappa_{n},\quad B_{n}(x)=-2\kappa_{n}x,\quad\kappa_{n}\neq0.
\]
In \cite{MR2311051}, Liu and Wang analyze polynomial solutions of the equation%
\begin{equation}
P_{n+1}(x)=A_{n}(x)P_{n}^{\prime}(x)+B_{n}(x)P_{n}(x)+C_{n}(x)P_{n-1}%
(x),~n=0,1,\ldots. \label{dde1}%
\end{equation}
By assuming that $P_{n}(x)$ has strictly nonnegative coefficients
(respectively alternating in sign) and $A_{n}(x)<0$ or $C_{n}(x)<0$ for
$x\leq0$ (respectively $x\geq0),$ , they show that $\{P_{n}(x)\}_{n=0}%
^{\infty}$ forms a Sturm sequence. Although (\ref{dde1}) is more general than
(\ref{dde}), the conditions on the sign of the coefficients of $P_{n}(x)$ is a
priori very difficult to check, except in rather isolated situations.

\medskip In this paper, we shall take an approach similar to that used in
\cite{MR1174072}. We establish criteria that ensure, for a sequence of
polynomials $\{P_{n}(x)\}_{n=0}^{\infty}$ satisfying (\ref{dde}), either that
all zeros of $P_{n}$ are real and simple, or the zeros of $P_{n}$ and
$P_{n+1}$ are interlacing, or both, based on conditions that can be checked
directly from $A_{n}$ and $B_{n}$. We present several interesting examples and
consider possible extensions.

\section{Preliminary results}

It is obvious that any sequence of polynomials is a solution of (\ref{dde}),
if we allow $A_{n}(x)$ and $B_{n}(x)$ to be (non unique) rational functions of
$x.$ The question of characterizing which sequences of polynomials
$\{P_{n}(x)\}_{n=0}^{\infty}$ are solutions of (\ref{dde}), when $A_{n}(x)$
and $B_{n}(x)$ are polynomials in $x$, is addressed in the following theorem.

\begin{theorem}
\label{AB}Suppose that $P_{n}(x)$ is a polynomial with $n$ simple zeros. Let
\begin{equation}
P_{n}(x_{n,k})=0,\quad y_{n,k}=\frac{P_{n+1}(x_{n,k})}{P_{n}^{\prime}%
(x_{n,k})},\quad k=1,\dots,n. \label{xkyk}%
\end{equation}
Then the following are equivalent:

\begin{description}
\item[i)] There exist polynomials $A_{n}(x)$ and $B_{n}(x)$ of degree at most
$2$ and $1$ respectively such that $\{P_{n}(x)\}_{n=0}^{\infty}$ satisfies
(\ref{dde}). For every $n\geq3,$ $A_{n}(x)$ and $B_{n}(x)$ are unique.

\item[ii)] For every $n\geq2$, there exists a quadratic polynomial $A_{n}(x)$
such that%
\[
A_{n}(x_{n,k})=y_{n,k},\quad k=1,\dots,n.
\]

\end{description}
\end{theorem}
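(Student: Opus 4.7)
The plan is to move between (i) and (ii) by using the data $(x_{n,k}, y_{n,k})$ as interpolation nodes for $A_n$, exploiting the fact that the zeros of $P_n$ are simple.

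For the implication (i) $\Rightarrow$ (ii), I would simply substitute $x = x_{n,k}$ into (\ref{dde}) and use $P_n(x_{n,k}) = 0$ to obtain $P_{n+1}(x_{n,k}) = A_n(x_{n,k})\,P_n'(x_{n,k})$. Simplicity of the zero $x_{n,k}$ means $P_n'(x_{n,k}) \neq 0$, so rearranging gives $A_n(x_{n,k}) = y_{n,k}$. The quadratic $A_n$ supplied by (i) therefore witnesses (ii) at once.

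For the converse (ii) $\Rightarrow$ (i), given a quadratic $A_n$ with $A_n(x_{n,k}) = y_{n,k}$, I would introduce the auxiliary polynomial
\[
Q_n(x) := P_{n+1}(x) - A_n(x)\, P_n'(x),
\]
which has degree at most $n+1$. By the interpolation hypothesis, $Q_n(x_{n,k}) = P_{n+1}(x_{n,k}) - y_{n,k}\, P_n'(x_{n,k}) = 0$ for every $k$, so $Q_n$ vanishes at the $n$ simple zeros of $P_n$. Hence $P_n \mid Q_n$, and setting $B_n(x) := Q_n(x)/P_n(x)$ yields a polynomial of degree at most $(n+1) - n = 1$. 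The defining identity $Q_n = B_n\, P_n$ is exactly (\ref{dde}).

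For uniqueness when $n \geq 3$, suppose both $(A_n,B_n)$ and $(\widetilde{A}_n, \widetilde{B}_n)$ satisfy (\ref{dde}). Subtracting the two equations gives $(A_n - \widetilde{A}_n)\,P_n'(x) = -(B_n - \widetilde{B}_n)\,P_n(x)$. Because $P_n$ has simple zeros we have $\gcd(P_n, P_n') = 1$, so $P_n$ must divide $A_n - \widetilde{A}_n$; but the latter has degree at most $2$, which is less than $n$ when $n \geq 3$, forcing $A_n = \widetilde{A}_n$ and in turn $B_n = \widetilde{B}_n$. No step is a serious obstacle: the entire argument is driven by degree bookkeeping combined with the simplicity of the zeros of $P_n$, with the mild subtlety being the need to verify that the quotient $Q_n/P_n$ is actually a polynomial (immediate from vanishing at simple zeros) and that its degree is at most $1$.
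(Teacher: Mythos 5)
Your argument for the main implications coincides with the paper's: (i) $\Rightarrow$ (ii) by evaluating (\ref{dde}) at the simple zeros, and (ii) $\Rightarrow$ (i) by forming $Q_n = P_{n+1} - A_n P_n'$, observing it vanishes at the $n$ simple zeros of $P_n$, and reading off $B_n = Q_n/P_n$ of degree at most $1$ by degree counting. Two remarks. First, your uniqueness argument for $n \geq 3$ is a genuinely different (and arguably cleaner) route: the paper gets uniqueness of $A_n$ from the fact that a quadratic is determined by its values at $n \geq 3$ distinct interpolation nodes, whereas you subtract two candidate identities and use $\gcd(P_n, P_n') = 1$ together with $\deg(A_n - \widetilde{A}_n) \leq 2 < n$; both are correct, and yours has the advantage of delivering uniqueness of the pair $(A_n, B_n)$ in one stroke without reference to the interpolation data. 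Second, there is a small omission: condition (ii) only speaks about $n \geq 2$, so to conclude the full statement (i) you must still exhibit admissible $A_0, B_0$ and $A_1, B_1$. The paper does this explicitly: since $P_0 = 1$ and $P_0' = 0$, one takes $B_0 = P_1$ (degree $1$, as $P_1$ has one simple zero) with $A_0$ arbitrary; and for $n=1$ one picks $B_1$ arbitrary of degree at most $1$ and sets $A_1 = (P_2 - B_1 P_1)/P_1'$, which is a polynomial of degree at most $2$ because $P_1'$ is a nonzero constant. These base cases are routine, but without them the implication (ii) $\Rightarrow$ (i) is not complete as stated.
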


\begin{proof}
Evaluating (\ref{dde}) at the zeros of $P_{n}(x)$, we clearly see that i)
$\Longrightarrow$ ii). Assume now that ii) holds. For $n=0,$ we have%
\[
P_{1}(x)=A_{0}(x)P_{0}^{\prime}(x)+B_{0}(x)P_{0}(x)=B_{0}(x),
\]
since $P_{0}(x)=1.$ Thus, $A_{0}(x)$ can be any polynomial of degree at most 2
and $B_{0}(x)=P_{1}(x).$ For $n=1,$ let $B_{1}(x)$ be an arbitrary polynomial
of degree at most $1$ and define $A_{1}(x)$ by
\[
A_{1}(x)=\frac{P_{2}(x)-B_{1}(x)P_{1}(x)}{P_{1}^{\prime}(x)}.
\]
Since $P_{1}^{\prime}(x)$ is a non-zero constant, we see that $i)$ holds for
$n=1.$ For $n\geq2,$ let $A_{n}(x)$ be a quadratic polynomial such that%
\[
y_{n,k}=A_{n}(x_{n,k}),\quad k=1,\dots,n.
\]
Then $P_{n+1}(x)-A_{n}(x)P_{n}^{\prime}(x)$ is a polynomial of degree at most
$n+1$ that is zero at each $x_{n,k}$ and therefore divisible by $P_{n}(x)$.
This yields the existence of a $B_{n}$ that is at most linear. When $n\geq3$,
$A_{n}$ is uniquely determined so that $i)$ holds.
\end{proof}

\begin{remark}
The assumption that $P_{n}(x)$ has n simple zeros is not very restrictive in
the sense that the statement and proof of Theorem 1 can be modified to cater
for this possibility.
\end{remark}

\begin{example}
Let $P_{n}(x)$ be the family of orthonormal polynomials with respect to the
Freud-type weight%
\[
w(x)=\sqrt{\frac{2}{t}}\frac{1}{\mathrm{K}_{\frac{1}{4}}\left(  \frac{t^{2}%
}{2}\right)  }\exp\left(  -x^{4}+2tx^{2}-\frac{t^{2}}{4}\right)  ,
\]
where $\mathrm{K}_{\nu}\left(  z\right)  $ is the Bessel function of the
second kind. In this case, we have \cite{MR2191786}
\[
A_{n}(x)=-\frac{1}{4a_{n+1}\left(  x^{2}+a_{n+1}^{2}+a_{n}^{2}-t\right)
},\quad B_{n}(x)=\frac{x\left(  x^{2}+a_{n+1}^{2}-t\right)  }{a_{n+1}\left(
x^{2}+a_{n+1}^{2}+a_{n}^{2}-t\right)  },
\]
where the numbers $a_{n}$ are the coefficients in the three-term recurrence
relation%
\[
xP_{n}(x)=a_{n+1}P_{n+1}(x)+a_{n}P_{n-1}(x)
\]
satisfying the string equation%
\[
n=4a_{n}^{2}\left(  a_{n+1}^{2}+a_{n}^{2}+a_{n-1}^{2}-t\right)  ,\quad
n\geq0.
\]
The initial values for $a_{n}$ are%
\begin{equation}
a_{0}=0,\quad a_{1}(t)=\sqrt{\frac{t}{2}}\sqrt{\frac{\mathrm{K}_{\frac{1}{4}%
}\left(  \frac{t^{2}}{4}\right)  }{\mathrm{K}_{\frac{3}{4}}\left(  \frac
{t^{2}}{4}\right)  }-1},\quad a_{1}(0)=\frac{\Gamma\left(  \frac{3}{4}\right)
}{2^{\frac{1}{4}}\sqrt{\pi}}, \label{a0a1}%
\end{equation}
where $a_{1}>0$ is chosen so that $P_{1}(x)=\frac{x}{a_{1}}$ has unit norm.

For $n=5,$ we have%
\[
P_{5}(x)=\frac{x^{5}-\alpha x^{3}+\beta x}{a_{1}a_{2}a_{3}a_{4}a_{5}},
\]
with%
\[
\alpha=a_{1}^{2}+a_{2}^{2}+a_{3}^{2}+a_{4}^{2},\quad\beta=a_{1}^{2}a_{3}%
^{2}+a_{1}^{2}a_{4}^{2}+a_{2}^{2}a_{4}^{2}%
\]
and therefore, with the notation of (\ref{xkyk}),%
\[
x_{5,1}=0,\quad x_{5,2}=\sqrt{\zeta_{+}},\quad x_{5,3}=-\sqrt{\zeta_{+}},\quad
x_{5,4}=\sqrt{\zeta_{-}},\quad x_{5,5}=-\sqrt{\zeta_{-}}%
\]%
\[
\zeta_{\pm}=\frac{1}{2}\left(  \alpha\pm\sqrt{\alpha^{2}-4\beta}\right)  .
\]
One can show, using a simple algebraic argument, that the polynomial
interpolating the points $\left(  x_{5,k},y_{5,k}\right)  ,$ $1\leq k\leq5$
has degree $4,$ unless $a_{1}=0,$ which contradicts (\ref{a0a1}). We deduce
from Theorem \ref{AB} that there are no polynomials $A_{n}(x)and B_{n}(x)$
such that $P_{n}(x)$ satisfies (\ref{dde}).
\end{example}

\section{Main result}

In our proofs, we will find it convenient to re-write the
differential-difference equation (\ref{dde}) in the form
\begin{equation}
P_{n+1}(x)=\frac{A_{n}(x)}{K_{n}(x)}\frac{d}{dx}[K_{n}(x)P_{n}(x)],
\label{int}%
\end{equation}
where $K_{n}(x)$ is an "integrating factor", defined by
\begin{equation}
{K_{n}(x)=\exp}\left(  \int\limits^{x}\frac{B_{n}(t)}{A_{n}(t)}~dt\right)  .
\label{Kn}%
\end{equation}
Since $A_{n}(x)$ and $B_{n}(x)$ are polynomials, we can obtain all possible
functions ${K_{n}(x)}$ by considering the location of the zeros of $A_{n}(x)$
and $B_{n}(x).$ This leads to the following classification, where we define
the extended real line by $\left(  -\infty,\infty\right)  \cup\left\{
-\infty,\infty\right\}  $ and the notation $f(\pm\infty)=0$ means
$\underset{x\rightarrow\pm\infty}{\lim}f(x)=0.$

\begin{theorem}
Let ${K_{n}(x)}$ be defined by (\ref{Kn}). Then, ${K_{n}(x)}$ has at most $2$
zeros on the extended real line.
\end{theorem}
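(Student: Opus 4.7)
The plan is to compute $K_n$ explicitly case by case, by partial-fractioning $B_n/A_n$, integrating, and exponentiating, and then to read off the zero locations on $(-\infty,\infty)\cup\{-\infty,+\infty\}$ directly. Because $\deg A_n\le 2$ and $\deg B_n\le 1$, only a handful of shapes for the rational function arise, so I would organize the analysis by the degree and factorization type of $A_n$.

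If $A_n$ is a nonzero constant, then $B_n/A_n$ is a polynomial of degree at most $1$, its antiderivative is a polynomial $Q$ of degree at most $2$, and $K_n=e^Q$ has no finite zeros; boundary zeros at $\pm\infty$ occur only when $Q(x)\to-\infty$ at those endpoints, and since $Q$ has degree at most $2$ there are at most $2$ such endpoint zeros. If $\deg A_n=1$, writing $A_n(t)=a(t-r)$ and decomposing $B_n/A_n=c_0+c_1/(t-r)$, integration gives $K_n(x)=\mathrm{const}\cdot |x-r|^{c_1}e^{c_0 x}$, and a short enumeration on the signs of $(c_0,c_1)$ shows at most two zeros among the candidate locations $\{r,+\infty,-\infty\}$.

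For $\deg A_n=2$ I would further branch on the roots of $A_n$. Two distinct real roots give $K_n(x)=|x-r_1|^{c_1}|x-r_2|^{c_2}$; a double real root $r$ gives $K_n(x)=|x-r|^{c_1}e^{-c_2/(x-r)}$, in which the essential singularity of the exponential factor forces one-sided behaviour at $r$ (the one-sided limits are $0$ and $\infty$), so $r$ contributes at most one actual boundary zero; and a complex-conjugate pair $p\pm iq$ with $q>0$ yields $K_n(x)=[(x-p)^{2}+q^{2}]^{c}\,e^{d\arctan((x-p)/q)}$, which is strictly positive on $\mathbb{R}$ (the base is positive and the $\arctan$ factor is bounded) and whose behaviour at $\pm\infty$ is governed by the sign of the single exponent $c$. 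In each subcase, the absence of a polynomial remainder in the partial fractions (a consequence of $\deg B_n<\deg A_n$) means the asymptotics at $\pm\infty$ are controlled by a single constant, namely the sum of the residues, which should permit the required bound of $2$ by elementary sign bookkeeping.

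The main obstacle I expect is precisely the distinct-real-roots subcase, where the candidate set $\{r_{1},r_{2},+\infty,-\infty\}$ has four elements; the substantive content of the theorem in this case is that no admissible choice of residues $(c_{1},c_{2})$ produces more than two actual zeros once the conventions for boundary zeros and for one-sided approach are fixed. Tracing the signs of $c_{1}$, $c_{2}$, and $c_{1}+c_{2}$ and identifying which of them can produce simultaneous zeros at points of the candidate set is the step on which the proof turns.
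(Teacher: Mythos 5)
Your overall strategy---partial-fraction $B_{n}/A_{n}$, integrate, exponentiate, and enumerate cases by the degree and factorization type of $A_{n}$---is exactly the paper's strategy (the paper's proof is precisely such a list of explicit formulas for $K_{n}$, organized into the same four cases; you even add the complex-conjugate-root subcase, which the paper omits). The difficulty is in the final step you defer. You correctly identify the distinct-real-roots subcase as the crux, but the ``sign bookkeeping'' you propose cannot close it in the form you have set it up, because with $K_{n}(x)=|x-r_{1}|^{c_{1}}|x-r_{2}|^{c_{2}}$ extended to all of $\mathbb{R}$ the bound of $2$ is simply false: take $c_{1}=2$, $c_{2}=-3$ (realized, e.g., by $A_{n}(x)=x(x-1)$, $B_{n}(x)=3-x$, so $c_{1}+c_{2}=\kappa_{n}=-1$). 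Then $K_{n}$ vanishes at $r_{1}$ (since $c_{1}>0$) and at both $+\infty$ and $-\infty$ (since $c_{1}+c_{2}<0$), giving three zeros on the extended real line as you have defined it. The same failure occurs in your double-root and degree-one subcases whenever the power at the finite point is negative but a one-sided zero survives at the essential singularity.

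The missing idea is that $K_{n}$, as defined by the antiderivative in (\ref{Kn}), is not a single function on all of $\mathbb{R}$: the integrand has poles at the real zeros of $A_{n}$, so $K_{n}$ is defined (and this is how the paper writes it, with $\ln(x-\lambda_{n})$ rather than $\ln|x-\lambda_{n}|$, and how Theorem \ref{main} uses it, namely as continuous on a single interval $[\alpha_{n},\beta_{n}]$) only on one maximal interval between consecutive real zeros of $A_{n}$. On the interior of any such interval $K_{n}=\exp(\cdot)$ is strictly positive, so its only possible zeros are the two endpoints of that interval (finite or infinite), and the bound of $2$ is immediate---no sign analysis of $c_{1}$, $c_{2}$, $c_{1}+c_{2}$ is needed at all. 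If you restate the domain convention this way, your case-by-case formulas are all correct and the proof closes; without it, the statement you are trying to prove is not true.
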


\begin{proof}
Since ${K_{n}(x)}$ depends on the ratio $B_{n}(x)/A_{n}(x),$ without loss of
generality, we can choose $A_{n}(x)$ to be monic. We consider the possible
cases in turn.

\begin{enumerate}
\item Let $A_{n}(x)=\left(  x-\lambda_{n}\right)  \left(  x-\xi_{n}\right)  ,$
$\deg\left(  B_{0}\right)  =1$ and $B_{n}(x)=\mu_{n},$ $\mu_{n}\neq0$ for
$n\geq1.$ If $\lambda_{n}\neq\xi_{n}$, we have%
\[
{K_{n}(x)=\exp}\left[  \frac{\mu_{n}}{\lambda_{n}-\xi_{n}}\ln\left(
\frac{x-\lambda_{n}}{x-\xi_{n}}\right)  \right]  .
\]
If $\xi_{n}=\lambda_{n}$, we have%
\[
{K_{n}(x)=\exp}\left(  -\frac{\mu_{n}}{x-\lambda_{n}}\right)  .
\]

\item Let $A_{n}(x)=\left(  x-\lambda_{n}\right)  \left(  x-\xi_{n}\right)  ,$
and $B_{n}(x)=\kappa_{n}\left(  x-\mu_{n}\right)  ,$ $\kappa_{n}\neq0.$ If
$\xi_{n}\neq\lambda_{n}\neq\mu_{n}$, we have%
\[
{K_{n}(x)=\exp}\left[  \kappa_{n}\frac{\lambda_{n}-\mu_{n}}{\lambda_{n}%
-\xi_{n}}\ln\left(  x-\lambda_{n}\right)  +\kappa_{n}\frac{\mu_{n}-\xi_{n}%
}{\lambda_{n}-\xi_{n}}\ln\left(  x-\xi_{n}\right)  \right]  .
\]
If $\xi_{n}=\lambda_{n}\neq\mu_{n}$, we have%
\[
{K_{n}(x)=\left(  x-\lambda_{n}\right)  }^{\kappa_{n}}{\exp}\left(  \kappa
_{n}\frac{\mu_{n}-\lambda_{n}}{x-\lambda_{n}}\right)  .
\]
If $\xi_{n}\neq\lambda_{n}=\mu_{n}$ or $\xi_{n}=\lambda_{n}=\mu_{n},$ we have%
\[
{K_{n}(x)=}\left(  {x-\xi_{n}}\right)  ^{\kappa_{n}}.
\]

\item Let $A_{n}(x)=x-\lambda_{n},$ and $B_{n}(x)=\kappa_{n}\left(  x-\mu
_{n}\right)  ,$ $\kappa_{n}\neq0.$ If $\lambda_{n}\neq\mu_{n}$, we have%
\[
{K_{n}(x)=\exp}\left[  \kappa_{n}x+\kappa_{n}\left(  \lambda_{n}-\mu
_{n}\right)  \ln\left(  x-\lambda_{n}\right)  \right]  .
\]
If $\lambda_{n}=\mu_{n}$, we have%
\[
{K_{n}(x)=\exp}\left(  \kappa_{n}x\right)  .
\]

\item Let $A_{n}(x)=1,$ and $B_{n}(x)=\kappa_{n}\left(  x-\mu_{n}\right)  ,$
$\kappa_{n}\neq0.$ We have%
\[
{K_{n}(x)=\exp}\left[  \kappa_{n}x\left(  \frac{x}{2}-\mu_{n}\right)  \right]
.
\]

\end{enumerate}
\end{proof}

We are know ready to prove our main result.

\begin{theorem}
\label{main} We denote the zeros of $P_{n}(x)$ in increasing order by
$\gamma_{i,n}$, $i=1,\dots,n$. Let
\[
-\infty\leq\alpha_{n+1}\leq\alpha_{n}<\beta_{n}\leq\beta_{n+1}\leq\infty
\]
and ${K_{n}(x)}$ be continuous on $\left[  \alpha_{n},\beta_{n}\right]  $ and
differentiable on $\left(  \alpha_{n},\beta_{n}\right)  .$Then,

\begin{itemize}
\item[(a)] If $K_{n}(x)=0$ only at $x=\alpha_{n}$ and $x=\beta_{n}$ for $1\leq
n\leq N$ and $\alpha_{1}<\gamma_{1,1}<\beta_{1}$, then the zeros of $P_{n}$
and $P_{n+1}$ interlace and are in the interval $(\alpha_{n},\beta_{n})$ for
$1\leq n\leq N$.

\item[(b)] If $K_{n}(x)=0$ only when $x=\alpha_{n}$ (respectively $\beta_{n}%
$), ${\frac{A_{n}(x)}{K_{n}(x)}=0}$ when $x=\beta_{n}$ (respectively
$\alpha_{n}$), $\beta_{n}<\beta_{n+1}$ for $1\leq n\leq N$ (respectively
$\alpha_{n+1}<\alpha_{n}$ for $1\leq n\leq N)$ and $\alpha_{1}<\gamma
_{1,1}<\beta_{1}$, then the zeros of $P_{n}$ and $P_{n+1}$ interlace and are
in the interval $(\alpha_{n},\beta_{n})$ for $1\leq n\leq N.$

\item[(c)] If $K_{n}(x)=0$ only when $x=\alpha_{n}$ (respectively $\beta_{n}%
$), ${\frac{A_{n}(x)}{K_{n}(x)}=0}$ when $x=\beta_{n}$ (respectively
$\alpha_{n}$) for $1\leq n\leq N$ and $\alpha_{1}<\gamma_{1,1}\leq\beta_{1}$
(respectively $\alpha_{1}\leq\gamma_{1,1}<\beta_{1}$), then all the zeros of
$P_{n}$ are real, simple and in the interval $(\alpha_{n},\beta_{n}]$
(respectively $[\alpha_{n},\beta_{n})$ ) for $1\leq n\leq N$.

\item[(d)] If $K_{n}(x)\neq0$, ${\frac{A_{n}(x)}{K_{n}(x)}}=0$ when
$x=\alpha_{n},\beta_{n}$ and $\alpha_{n+1}<\alpha_{n}<\beta_{n}<\beta_{n+1}$
for $1\leq n\leq N$ with $\alpha_{1}<\gamma_{1,1}<\beta_{1}$, then the zeros
of $P_{n}$ and $P_{n+1}$ interlace for $1\leq n\leq N$.
\end{itemize}
\end{theorem}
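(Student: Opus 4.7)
My plan is to prove all four cases simultaneously by induction on $n$, exploiting the ``integrated'' form~(\ref{int}):
\[
P_{n+1}(x) = \frac{A_{n}(x)}{K_{n}(x)}\, F_{n}'(x), \qquad F_{n}(x) := K_{n}(x)P_{n}(x).
\]
On the open interval $(\alpha_{n},\beta_{n})$ the hypotheses of each case make the prefactor $A_{n}/K_{n}$ a smooth, nonvanishing function, so any zero of $F_{n}'$ there is a zero of $P_{n+1}$; simultaneously, any endpoint where $A_{n}/K_{n}$ vanishes automatically furnishes a zero of $P_{n+1}$. The base $n=1$ is the hypothesis $\alpha_{1}<\gamma_{1,1}<\beta_{1}$, and the nesting $\alpha_{n+1}\leq\alpha_{n}<\beta_{n}\leq\beta_{n+1}$ transports the positional data about the zeros of $P_{n+1}$ into the next interval so that the induction can iterate.

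\textbf{Case-by-case counts.} In (a), $F_{n}$ vanishes at both endpoints (since $K_{n}$ does) and at the $n$ interior zeros of $P_{n}$, giving $n+2$ zeros in $[\alpha_{n},\beta_{n}]$; Rolle's theorem produces $n+1$ distinct zeros of $F_{n}'$, one in each subinterval, and these are exactly the zeros of $P_{n+1}$, strictly interlacing those of $P_{n}$ and lying in $(\alpha_{n},\beta_{n})\subseteq(\alpha_{n+1},\beta_{n+1})$. In (b), suppose $K_{n}(\alpha_{n})=0$ and $(A_{n}/K_{n})(\beta_{n})=0$; then $F_{n}$ has $n+1$ zeros in $[\alpha_{n},\beta_{n})$, Rolle delivers $n$ zeros of $P_{n+1}$ inside $(\alpha_{n},\gamma_{n,n})$ interlacing those of $P_{n}$, and the last zero of $P_{n+1}$ is $\beta_{n}$ itself, coming from the prefactor; the strict inequality $\beta_{n}<\beta_{n+1}$ is precisely what is needed to push this last zero strictly inside $(\alpha_{n+1},\beta_{n+1})$ so that $F_{n+1}$ is set up the same way. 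Case (c) runs the same construction as (b) but drops the strict nesting, so a zero of $P_{n+1}$ may coincide with $\beta_{n}$; consequently one concludes only reality and simplicity of the zeros in $(\alpha_{n},\beta_{n}]$, not interlacing. In (d), $K_{n}$ does not vanish at the endpoints so $F_{n}$ has only the $n$ interior zeros of $P_{n}$; Rolle applied on each of the $n-1$ inter-zero intervals supplies $n-1$ simple interior zeros of $P_{n+1}$, while the two endpoint zeros come from $(A_{n}/K_{n})(\alpha_{n})=(A_{n}/K_{n})(\beta_{n})=0$, and the strict double nesting $\alpha_{n+1}<\alpha_{n}<\beta_{n}<\beta_{n+1}$ embeds the resulting $n+1$ zeros properly inside the next interval.

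\textbf{Anticipated obstacles.} The real work is boundary bookkeeping: one must check that the limiting values $K_{n}(\alpha_{n})=0$ or $(A_{n}/K_{n})(\beta_{n})=0$ are actually attained---this is precisely what the continuity of $K_{n}$ on $[\alpha_{n},\beta_{n}]$ and the classification of Theorem~2 guarantee---so that $F_{n}$ is eligible for Rolle on the closed interval and $P_{n+1}$ extends continuously to the endpoints. A second, subtler point arises in the iteration of (c): when a zero of $P_{n+1}$ falls on $\beta_{n}=\beta_{n+1}$, at the next step $F_{n+1}$ still vanishes at that right endpoint because $P_{n+1}$ does, so Rolle still produces $n+1$ interior zeros of $P_{n+2}$, to which the endpoint vanishing of $A_{n+1}/K_{n+1}$ adds a further one; verifying that no extra ``phantom'' zero is introduced here and that the resulting simple count equals $\deg P_{n+2}=n+2$ is the one place where the induction requires genuine care. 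Once this zero-count with distinctness is nailed down in each case, interlacing (where claimed) is automatic from the order in which Rolle's theorem yields the zeros.
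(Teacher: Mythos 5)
Your proposal is correct and follows essentially the same route as the paper: rewriting the recurrence in the integrating-factor form $P_{n+1}=(A_n/K_n)\,\frac{d}{dx}[K_nP_n]$, applying Rolle's theorem to $K_nP_n$ on $[\alpha_n,\beta_n]$, supplementing with the endpoint zeros supplied by the vanishing of $A_n/K_n$, matching the count against $\deg P_{n+1}=n+1$, and inducting via the nesting of the intervals. The case-by-case zero counts you give coincide with those in the paper's proof, including the distinction that the possible coincidence $\gamma_{n+1,n+1}=\beta_n=\beta_{n+1}$ in case (c) is what weakens the conclusion there from interlacing to mere reality and simplicity.
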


\begin{proof}
We use the familiar extension of Rolle's theorem to an infinite open interval.

\begin{itemize}
\item[(a)] When $n=1$ we have $P_{2}(x)=\frac{A_{1}(x)}{K_{1}(x)}\frac{d}%
{dx}[K_{1}(x)P_{1}(x)]$ and $K_{1}(x)P_{1}(x)=0$ at $\alpha_{1}$, $\beta_{1}$
and $\gamma_{1,1}$ with $\alpha_{1}<\gamma_{1,1}<\beta$. It follows from
Rolle's theorem that $P_{2}(x)=0$ at $\gamma_{1,2}$, $\gamma_{2,2}$ with%
\[
\alpha_{2}\leq\alpha_{1}<\gamma_{1,2}<\gamma_{1,1}<\gamma_{2,2}<\beta_{1}%
\leq\beta_{2}.
\]
Now, let $2\leq n\leq N$ and assume that we have proved the result for
$P_{n}(x).$ Since $K_{n}(x)P_{n}(x)$ vanishes at $\alpha_{n}$, $\beta_{n}$ and
$\gamma_{i,n}$ for $i=1,\dots,n$ with $\alpha_{n}<\gamma_{1,n}<\ldots
<\gamma_{n,n}<\beta_{n}$, Rolle's theorem applied to (\ref{int}) yields
\[
\alpha_{n+1}\leq\alpha_{n}<\gamma_{1,n+1}<\gamma_{1,n}<\dots<\gamma
_{n,n}<\gamma_{n+1,n+1}<\beta_{n}\leq\beta_{n+1}%
\]
and the result follows.

\item[(b)] We prove the result for $\frac{A_{n}}{K_{n}}(\beta_{n})=0$ for
$1\leq n\leq N$, the other case being analogous. When $n=1$ we have
$P_{2}(x)=\frac{A_{1}(x)}{K_{1}(x)}\frac{d}{dx}[K_{1}(x)P_{1}(x)]$ and
$K_{1}(x)P_{1}(x)=0$ at $\alpha_{1}$ and $\gamma_{1,1}$ with $\alpha
_{1}<\gamma_{1,1}<$ $\beta_{1}$ and it follows from Rolle's theorem that
$\alpha_{1}<\gamma_{1,2}<\gamma_{1,1}$. The second zero of $P_{2}$ coincides
with $\beta_{1}$, so we have
\[
\alpha_{2}\leq\alpha_{1}<\gamma_{1,2}<\gamma_{1,1}<\gamma_{2,2}=\beta
_{1}<\beta_{2}.
\]
Now, let $2\leq n\leq N$ and assume that we have proved the result for
$P_{n}(x).$ Since $K_{n}(x)P_{n}(x)$ vanishes at $\alpha_{n}$ and
$\gamma_{i,n}$ for $i=1,\dots,n$ with $\alpha_{n}<\gamma_{1,n}<\ldots
<\gamma_{n,n}<\beta_{n}$, Rolle's theorem applied to (\ref{int}) yields
\[
\alpha_{n+1}\leq\alpha_{n}<\gamma_{1,n+1}<\gamma_{1,n}<\dots<\gamma_{n,n}.
\]

Since the largest zero of $P_{n+1}$ coincides with $\beta_{n}$ we have
\[
\alpha_{n+1}\leq\alpha_{n}<\gamma_{1,n+1}<\gamma_{1,n}<\dots<\gamma
_{n,n}<\gamma_{n+1,n+1}=\beta_{n}<\beta_{n+1}%
\]

and the result follows.

\item[(c)] We prove the result for $\frac{A_{n}}{K_{n}}(\beta_{n})=0$ for
$1\leq n\leq N$, the other case being analogous. When $n=1$ we have
$P_{2}(x)=\frac{A_{1}(x)}{K_{1}(x)}\frac{d}{dx}[K_{1}(x)P_{1}(x)]$ and
$K_{1}(x)P_{1}(x)=0$ at $\alpha_{1}$ and $\gamma_{1,1}$ with $\alpha
_{1}<\gamma_{1,1}\leq\beta_{1}$ and it follows from Rolle's theorem that
$\alpha_{1}<\gamma_{1,2}<\gamma_{1,1}$. The second zero of $P_{2}$ coincides
with $\beta_{1}$, so we have
\[
\alpha_{2}\leq\alpha_{1}<\gamma_{1,2}<\gamma_{1,1}\leq\gamma_{2,2}=\beta
_{1}\leq\beta_{2}.
\]
Now, let $2\leq n\leq N$ and assume that we have proved the result for
$P_{n}(x).$ Since $K_{n}(x)P_{n}(x)$ vanishes at $\alpha_{n}$ and
$\gamma_{i,n}$ for $i=1,\dots,n$ with $\alpha_{n}<\gamma_{1,n}<\ldots
<\gamma_{n,n}\leq\beta_{n}$, Rolle's theorem applied to (\ref{int}) yields%
\[
\alpha_{n}<\gamma_{1,n+1}<\gamma_{1,n}<\dots<\gamma_{n-1,n}<\gamma
_{n,n+1}<\gamma_{n,n}.
\]
The largest zero of $P_{n+1}$ coincides with $\beta_{n}$, so we have
\[
\alpha_{n+1}\leq\alpha_{n}<\gamma_{1,n+1}<\gamma_{1,n}<\dots<\gamma_{n,n}%
\leq\gamma_{n+1,n+1}=\beta_{n}\leq\beta_{n+1}%
\]
and the result follows.

\item[(d)] When $n=1$ we have $P_{2}(x)=\frac{A_{1}(x)}{K_{1}(x)}\frac{d}%
{dx}[K_{1}(x)P_{1}(x)]$ and $P_{2}(x)=0$ at $\alpha_{1}$ and $\beta_{1}$ with
$\alpha_{1}=\gamma_{1,2}<\gamma_{2,2}=\beta_{1}$. When $n=2$, $\gamma
_{1,3}=\beta_{2}$ and $\gamma_{3,3}=\beta_{2}$. Furthermore, $P_{3}%
(x)=\frac{A_{2}(x)}{K_{2}(x)}\frac{d}{dx}[K_{2}(x)P_{2}(x)]$ and Rolle's
theorem implies that $\alpha_{1}=\gamma_{1,2}<\gamma_{2,2}=\beta_{1}$. Hence
\[
\alpha_{2}=\gamma_{1,3}<\alpha_{1}=\gamma_{1,2}<\gamma_{2,3}<\gamma
_{2,2}=\beta_{1}<\gamma_{3,3}=\beta_{2}.
\]
Now, let $3\leq n\leq N$ and assume that we have proved the result for
$P_{n}(x).$ The smallest and largest zero of $P_{n+1}$ is $\gamma
_{1,n+1}=\alpha_{n}$ and $\gamma_{n+1,n+1}=\beta_{n}$. The remaining $n-1$
zeros are obtained by applying Rolle's theorem to the function inside the
square brackets in (\ref{int})
\[
\alpha_{n}=\gamma_{1,n+1}<\gamma_{1,n}<\gamma_{2,n+1}<\dots<\gamma
_{n,n+1}<\gamma_{n,n}<\gamma_{n+1,n+1}=\beta_{n}.
\]

\end{itemize}
\end{proof}

\section{Examples}

\medskip We conclude by giving some examples where our results apply,
highlighting particular choices of $A_{n}$ and $B_{n}$ that give rise to known
families of polynomials.

\begin{example}
Let%
\[
A_{n}(x)=\kappa_{n}\left(  1-x^{2}\right)  ,\quad B_{n}(x)=-2\kappa_{n}%
r_{n}x,\quad r_{n}>0,\kappa_{n}\neq0.
\]
Then%
\[
{K_{n}(x)=}\left(  x^{2}-1\right)  ^{r_{n}}%
\]
and Theorem \ref{main} (a) applies. In \cite{MR1471728}, \cite{MR1359933}, the
authors obtain the same result using a different approach.
\end{example}

\begin{example}
The Bell polynomials $\mathfrak{B}_{n}(x)$ are defined by%
\[
\mathfrak{B}_{n}(x)=\sum_{k=0}^{n}S_{k}^{n}x^{k},\quad n=0,1,\ldots,
\]
where $S_{k}^{n}$ is the Stirling number of the second kind. They satisfy the
differential-difference equation
\[
\mathfrak{B}_{n+1}(x)=x\left[  \mathfrak{B}_{n}^{\prime}(x)+\mathfrak{B}%
_{n}(x)\right]  ,
\]
from which we obtain%
\[
A_{n}(x)=x,\quad{K_{n}(x)=e}^{x}.
\]
In this case, we have $\alpha_{n}=-\infty$ and $\beta_{n}=0$ and from Theorem
\ref{main} (c) it follows that the zeros of $\mathfrak{B}_{n}(x)$ are real and
simple and lie in the interval $(-\infty,0].$
\end{example}

\begin{example}
Let $P_{n}(x)$ be the family of polynomials defined by%
\[
P_{n}(x)={}\left(  c\right)  _{n}\,_{2}F_{1}\left(  \left.
\begin{array}
[c]{c}%
-n,\quad b\\
c
\end{array}
\right\vert x\right)  .
\]
In this case, we have
\[
A_{n}(x)=x(1-x),\quad B_{n}(x)=n+c-bx
\]
and thus%
\[
K_{n}(x)=x^{n+c}(x-1)^{b-c-n}.
\]
Provided that $n\in\left(  -c,b-c\right)  $, it follows from Theorem
\ref{main} (a) that the zeros of $P_{n}$ and $P_{n+1}$ are interlacing and lie
in the interval $(0,1).$ The same result is obtained in \cite{MR2267530} using
a different technique.
\end{example}

\begin{acknowledgement}
The work of D. Dominici was supported by a Humboldt Research Fellowship for
experienced researchers provided by the Alexander von Humboldt Foundation. The
work of K. Driver was supported by the National Research Foundation of South
Africa under grant number 2053730. The work of K. Jordaan was supported by the
National Research Foundation of South Africa under grant number 2054423.
\end{acknowledgement}

\noindent\noindent

\begin{thebibliography}{99}                                                                                               %
\bibitem {MR0146097}L.~Carlitz. \newblock Single variable {B}ell polynomials.
\newblock {\em Collect. Math.}, 14:13--25, 1962.

\bibitem {MR2267530}K.~Driver and K.~Jordaan. \newblock Separation theorems
for the zeros of certain hypergeometric polynomials.
\newblock {\em J. Comput. Appl. Math.}, 199(1):48--55, 2007.

\bibitem {MR1471728}F.~Dubeau and J.~Savoie. \newblock On interlacing
properties of the roots of orthogonal and {E}uler-{F}robenius polynomials.
\newblock In \emph{Approximation theory {VIII}, {V}ol.\ 1 ({C}ollege
{S}tation, {TX}, 1995)}, volume~6 of \emph{Ser. Approx. Decompos.}, pages
185--191. World Sci. Publ., River Edge, NJ, 1995.

\bibitem {MR1359933}F.~Dubeau and J.~Savoie. \newblock On the roots of
orthogonal polynomials and {E}uler-{F}robenius polynomials.
\newblock {\em J. Math. Anal. Appl.}, 196(1):84--98, 1995.

\bibitem {frobenius}F.~G. Frobenius. \newblock Uber die {B}ernoullischen
{Z}ahlen und die {E}ulerischen {P}olynome.
\newblock {\em Sitzungsber. Preuss. Akad. Wiss.}, 809--847, 1910.

\bibitem {MR1321452}M.~E. Hoffman. \newblock Derivative polynomials for
tangent and secant. \newblock {\em Amer. Math. Monthly}, 102(1):23--30, 1995.

\bibitem {MR2191786}M.~E.~H. Ismail.
\newblock {\em Classical and quantum orthogonal polynomials in one variable},
volume~98 of \emph{Encyclopedia of Mathematics and its Applications}.
\newblock Cambridge University Press, Cambridge, 2005.

\bibitem {MR2311051}L.~L. Liu and Y.~Wang. \newblock A unified approach to
polynomial sequences with only real zeros.
\newblock {\em Adv. in Appl. Math.}, 38(4):542--560, 2007.

\bibitem {MR0372517}G.~Szeg{\H{o}}. \newblock {\em Orthogonal polynomials}.
\newblock American Mathematical Society, Providence, R.I., fourth edition, 1975.

\bibitem {MR1174072}B.~A. Vertge{\u{\i}}m. \newblock Euler polynomials and
their generalization. \newblock {\em Sibirsk. Mat. Zh.}, 33(2):164--167, 222, 1992.
\end{thebibliography}
\end{document}